\documentclass[11pt]{amsart}
\usepackage[colorlinks]{hyperref}
\hypersetup{nesting=true,debug=true,naturalnames=true}
\usepackage{graphicx,amsmath,amsthm,amscd,amssymb,upref,amsfonts,enumerate}
\usepackage{color}
\newtheorem{thm}{Theorem}[section]

\newtheorem{prop}[thm]{Proposition}
\theoremstyle{definition}

\theoremstyle{remark}

\numberwithin{equation}{section}

\newcommand{\OO}{\mathcal{O}}

\def\Q{\mathbb{Q}}

\newfont{\wncyr}{wncyr10 at 12pt}
\newcommand{\D}{\displaystyle}

\pagestyle{plain}
\topmargin 0.75cm
\oddsidemargin 0.75cm
\textwidth 15cm
\textheight 21cm
\footskip 1.0cm
\hoffset -1cm
\baselineskip 16 true pt

\begin{document}
\title
{Generators for the elliptic curve\\\vskip 3mm $E_{(p,q)}: y^2=x^3-p^2x+q^2$}
\author{\bf M. Khazali ,H. Daghigh and A. Alidadi }
\address{Department of Mathematics, Bam Higher Educational Complex Of I. R. Iran}
\email{mehrdad@bam.ac.ir}
\email{hassan@kashanu.ac.ir}
\email{alidadi@bam.ac.ir}


\begin{abstract}

Let $ \lbrace E_{(p,q)} \rbrace $ represents a family of elliptic curves over $ \Q$ as 
defined by the Weierstrass equation  $  E_{(p,q)} : y^2=x^3-p^2x+q^2 $, where p and q are prime numbers exceeding five. 
Previous research established that the   $ \lbrace E_{(p,q)} \rbrace  $ had a rank of at least two for all $p, q > 5 $ and two independent points. 
This research implies that the two points can be extended to a basis for the family $\lbrace E_{(p,q)} \rbrace  $.
\vskip 3mm
\noindent {\bf Keywords:} Independent points, Rank of an elliptic curve, Canonical Height.
\vskip 3mm
\noindent{\it 2020 Mathematics Subject Classification:} $ 11G05$, $14G05$.
\end{abstract}

\maketitle
\vskip 15mm
\section{Introduction}

Let $ \lbrace E_{(1,m)} \rbrace $ represents a family of elliptic curves over $ \Q$ as 
defined by the Weierstrass equation  $ E_{(1,m)}: y^2=x^3-x+m^2 $, where m is a integer  number. 
Brown and Myers discovered in \cite{B} that the this family  included two independent points, Fujita and Nara illustrated in \cite{Fn} that points could be extended to a basis for this family. 

Let $ \lbrace E_{(n,1)} \rbrace $ represents a family of elliptic curves over $ \Q$ as 
defined by the Weierstrass equation  $  E_{(n,1)} : y^2=x^3-n^2x+1 $, where n is a integer  number. 
 Antoniewicz proved in \cite{A}that the this family included two independent points. Fujita and Nara illustrated in \cite{Fn} that points could be extended to a basis for this family. 
 
Let $ \lbrace E_{(m,p)} \rbrace $ represents a family of elliptic curves over $ \Q$ as 
defined by the Weierstrass equation  $ E_{(m,p)} : y^2=x^3-m^2x+p^2 $, where p is a prime number and $m\equiv 2 \pmod {32}$.
 Abhishek and Datt Kumar's demonstrated in   \cite{JU}, the  this family  had two independent points.
 
Let $ \lbrace E_{(p,q)} \rbrace $ represents a family of elliptic curves over $ \Q$ as 
defined by the Weierstrass equation  $  E_{(p,q)} : y^2=x^3-p^2x+q^2 $, where p and q are prime numbers exceeding five. 
  The family $ \lbrace E_{(p,q)} \rbrace$ had a minimum rank of two for all  $p, q > 5 $ and $P_1=(0, q)\in  E_{(p,q)}(\Q) $ and $P_2=(-p, q)\in  E_{(p,q)}(\Q) $ are independent points, as shown by previous research \cite{kh}. 
We describe how the two points  $P_1$ and $P_2$ can be extended to a basis for this family and improved under special conditions in this article. The single most powerful statement is demonstrated by the theorem  \ref{T1}.

\begin{thm}\label{T1}[Main Theorem].
Let $ \lbrace E_{(p,q)} \rbrace $ represents a family of elliptic curves over $ \Q$ as 
defined by the Weierstrass equation  $  E_{(p,q)} : y^2=x^3-p^2x+q^2 $, where p and q are prime numbers exceeding five.
  If  $p> 2\sqrt[4]{2}q$, then  $P_1=(0, q)\in  E_{(p,q)}(\Q)  $ and $P_2=(-p, q)\in  E_{(p,q)}(\Q) $ can be extended to a basis for this family.
\end{thm}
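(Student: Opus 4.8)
The plan is to regard $M=E_{(p,q)}(\Q)/E_{(p,q)}(\Q)_{\mathrm{tors}}$ as a lattice equipped with the positive–definite inner product given by the canonical (N\'eron--Tate) height pairing $\langle P,R\rangle=\tfrac12\big(\hat h(P+R)-\hat h(P)-\hat h(R)\big)$. Saying that the independent points $P_1,P_2$ extend to a $\Z$–basis of $M$ is exactly the assertion that the sublattice $N=\langle P_1,P_2\rangle_{\Z}$ is \emph{saturated}, i.e.\ that $M/N$ is torsion free, equivalently that $[\bar N:N]=1$, where $\bar N=(N\otimes\Q)\cap M$ is the saturation of $N$. Since $\bar N/N$ is finite it suffices to prove that $N$ is $\ell$–saturated for every prime $\ell$, meaning no $R\in M$ with $\ell R\in N$ lies outside $N$.

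First I would bound which primes $\ell$ can divide $[\bar N:N]$. Writing $\mathrm{Reg}(P_1,P_2)=\hat h(P_1)\hat h(P_2)-\langle P_1,P_2\rangle^2$ for the regulator of $N$, the lattice identity
\[
\mathrm{Reg}(P_1,P_2)=[\bar N:N]^2\,\mathrm{Reg}(\bar N)
\]
holds. If $\lambda$ is a lower bound for $\hat h$ on all non–torsion points, then reduction theory for rank–two lattices (a reduced basis $R_1,R_2$ of $\bar N$ satisfies $|\langle R_1,R_2\rangle|\le\tfrac12\hat h(R_1)$, hence $\mathrm{Reg}(\bar N)\ge\tfrac34\hat h(R_1)\hat h(R_2)\ge\tfrac34\lambda^2$) gives
\[
[\bar N:N]^2\le\frac{4}{3}\,\frac{\mathrm{Reg}(P_1,P_2)}{\lambda^2}.
\]
To make the right–hand side explicit I would estimate $\hat h(P_1),\hat h(P_2)$ and $\lambda$ through a difference bound $\big|\hat h(P)-\tfrac12 h(x(P))\big|\le\varepsilon(p,q)$, splitting $\varepsilon$ into its archimedean part and its contributions at the primes of bad reduction (those dividing $\Delta=16(4p^6-27q^4)$, together with $p$ and $q$). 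Because $x(P_1)=0$ and $x(P_2)=-p$ we have $h(x(P_1))=0$ and $h(x(P_2))=\log p$, so $\mathrm{Reg}(P_1,P_2)$ gets an explicit upper bound and $\lambda$ an explicit lower bound; the hypothesis $p>2\sqrt[4]{2}\,q$ is precisely the threshold making $\tfrac43\,\mathrm{Reg}(P_1,P_2)/\lambda^2<9$, so that no odd prime divides $[\bar N:N]$ and only $\ell=2$ can survive.

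It then remains to establish $2$–saturation, where the real locus does the work. Under $p>2\sqrt[4]{2}\,q$ the quantity $4p^6-27q^4$ is positive, so $f(x)=x^3-p^2x+q^2$ has three real roots $e_1<e_2<e_3$ and $E_{(p,q)}(\R)$ has two components, the identity component being the unbounded one. Since $f(0)=f(-p)=q^2>0$ one checks $e_1<-p<0<e_2$, so $P_1,P_2$ both lie on the bounded oval, i.e.\ in the non–identity component; as $2E_{(p,q)}(\R)$ equals the identity component, neither $P_1$ nor $P_2$ lies in $2E_{(p,q)}(\Q)$. The only remaining $\F_2$–combination is $P_1+P_2$, which lies in the identity component. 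Here I would use that the line $y=q$ meets $E_{(p,q)}$ at $x=0,-p,p$, so $P_1+P_2=-(p,q)=(p,-q)$, and then compute the image under the $2$–descent map $E_{(p,q)}(\Q)/2E_{(p,q)}(\Q)\hookrightarrow K^{\ast}/(K^{\ast})^2$, $(x,y)\mapsto x-\theta$, where $K=\Q[\theta]$ with $\theta^3-p^2\theta+q^2=0$, showing that $p-\theta$ is not a square by exhibiting a place of $K$ at which its class is nontrivial. This makes the images of $P_1,P_2$ independent in $E_{(p,q)}(\Q)/2E_{(p,q)}(\Q)$, so $N$ is $2$–saturated; with the odd primes already excluded, $[\bar N:N]=1$.

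The main obstacle is the explicit height bookkeeping of the second step: producing an archimedean local–height estimate that is sharp on the bounded component where $P_1,P_2$ live, and controlling the non–archimedean corrections at $p$, $q$ and the primes dividing $4p^6-27q^4$ tightly enough that the final inequality collapses to exactly $p>2\sqrt[4]{2}\,q$ rather than a weaker constant. The secondary difficulty is the descent step ruling out $P_1+P_2\in 2E_{(p,q)}(\Q)$, which requires pinning down a place where the Kummer class $p-\theta$ is non-square.
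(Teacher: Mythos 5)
Your proposal is essentially the paper's own argument: the lattice-reduction bound $[\bar N:N]^2\le \tfrac{4}{3}\,\mathrm{Reg}(P_1,P_2)/\lambda^2$ you derive is precisely the theorem of Siksek \cite{si} that the paper invokes, fed with the same kind of explicit upper bounds on $\hat h(P_1),\hat h(P_2)$ and lower bound $\lambda$ for non-torsion points (the paper takes these from Silverman \cite{S4}, using $p>2\sqrt[4]{2}q$ to reduce the bounds to powers of $p$) to force the index $v<3$, after which oddness of $v$ gives $v=1$. The only divergence is in the bookkeeping of that last step: the paper simply asserts $2\nmid v$ (relying on \cite{kh}) where you sketch a 2-saturation argument via real connected components for $P_1,P_2$ and a Kummer-map computation for $P_1+P_2=(p,-q)$, which you correctly identify as the remaining work but do not carry out.
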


\section{Upper and Lower bound }

In this section, we will continue our investigation of the concept of canonical height, an imperative tool for elliptic curve arithmetic. 
The canonical height of P  by
\[
\begin{array}{cc}
\hat{h}:E(\Q) \longrightarrow [0,\infty)\qquad \qquad \qquad\\
P\longmapsto
\begin{cases}
\D{\lim_{n\to \infty}\frac{h(2^{n}P)}{4^n}} & P\not=\OO.\\
0&P=\OO.
\end{cases}
\end{array}
\]
dose not suitable for computation.Tate's height is the alternative definition of canonical height presented here \cite{S4}.Therefore, we  have 
\[\hat{h}(P)=\hat{\lambda}_{\infty}(P)+\sum_{r|\Delta}\hat{\lambda}_{r}(P).\]

In fact, the canonical height is equal to the sum of the archimedean local height and the local height, assuming that r is a prime number such that $r \mid \Delta$.
Additionally, we observe that the discriminant of $ E (p,q)$ is $ \Delta=16(4p^6-27q^4)=16 \Delta' $.
In a past research  \cite{kh}, we demonstrated that 3 and 5 $\nmid  \Delta' $, but   $ \Delta' $ has no squares in this article. 
Currently, we assert that equation $y^2=x^3-p^2x+q^2$  is the global minimal.

\begin{prop}\label{T12}
The Weierstrass equation  $y^2=x^3-p^2x+q^2$  is the global minimum.
\end{prop}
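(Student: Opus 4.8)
The plan is to show that the given model is minimal at every prime $\ell$ and then to invoke the fact that, since $\Q$ has class number one, an integral Weierstrass equation that is minimal at every prime is automatically a global minimal model. First I would record the standard quantities attached to $y^2=x^3-p^2x+q^2$: here $a_1=a_2=a_3=0$, $a_4=-p^2$ and $a_6=q^2$, so that $c_4=48p^2$, $c_6=-864q^2$ and $\Delta=16(4p^6-27q^4)=16\Delta'$, in agreement with the discriminant already recorded above.

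The key tool is the minimality criterion coming from the transformation law for discriminants: a change of coordinates carrying one integral model to another multiplies $\Delta$ by $u^{-12}$ for some $u\in\Q^{\times}$, and to decrease $v_\ell(\Delta)$ while keeping the model integral one needs $v_\ell(u)\ge 1$, which forces $v_\ell(\Delta)\ge 12$. Hence the model is automatically minimal at $\ell$ whenever $v_\ell(\Delta)<12$, and the strategy reduces to bounding $v_\ell(\Delta)$ at every prime.

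Next I would compute these valuations using the factorisation $\Delta=16\Delta'$. Since $p,q>5$ are odd primes, $4p^6$ is even and $27q^4$ is odd, so $\Delta'$ is odd and $v_2(\Delta)=4$. For every odd prime $\ell$ we then have $v_\ell(\Delta)=v_\ell(\Delta')$, and by the result established earlier in the excerpt that $\Delta'$ is squarefree we get $v_\ell(\Delta')\le 1$. Consequently $v_\ell(\Delta)\le 4<12$ for every prime $\ell$, so the model is minimal at each prime and is therefore the global minimal model.

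The step that would normally be the main obstacle—deciding minimality at the awkward primes $\ell=2$ and $\ell=3$, where the usual $c_4$/$c_6$ reduction test requires Tate's algorithm or Kraus's conditions—is entirely sidestepped here, because the uniform bound $v_\ell(\Delta)<12$ settles minimality at all primes at once. The only point requiring care is to confirm that the earlier squarefreeness statement for $\Delta'$ is exactly what licenses $v_\ell(\Delta')\le 1$; if one only had the weaker facts $3,5\nmid\Delta'$, I would instead argue via the $c_4$-criterion for $\ell\ge 5$, noting that $v_p(c_4)=2<4$ gives minimality at $p$ and that $v_\ell(c_4)=0$ for every prime $\ell\ge 7$ with $\ell\neq p$, while handling $\ell=2,3$ separately through $v_2(\Delta)=4$ and $v_3(\Delta)=0$.
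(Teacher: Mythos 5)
Your proof is correct, but it takes a genuinely different route from the paper: the paper gives no argument at all, disposing of the proposition with a one-line citation of Lemma 3.1 of \cite{Fn}, where minimality of equations of the shape $y^2=x^3-m^2x+n^2$ is established. You instead give a self-contained valuation argument: any change between integral Weierstrass models rescales $\Delta$ by $u^{-12}$, so non-minimality at a prime $\ell$ forces $v_\ell(\Delta)\geqslant 12$, and you rule this out everywhere by writing $\Delta=16\Delta'$ with $\Delta'$ odd (as $4p^6$ is even and $27q^4$ is odd), so that $v_2(\Delta)=4$ and $v_\ell(\Delta)=v_\ell(\Delta')\leqslant 1$ at odd $\ell$ by squarefreeness. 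Two remarks on the comparison. First, your main line leans on the statement that $\Delta'$ is squarefree, which in the paper is an \emph{assumption} imposed on the family (``$\Delta'$ has no squares in this article''), not a proved fact; you correctly flag this, and your fallback is actually the stronger and cleaner route: from $c_4=48p^2=2^4\cdot 3\cdot p^2$ one gets $v_\ell(c_4)=0$ for $\ell\neq 2,3,p$, $v_p(c_4)=2<4$, $v_3(c_4)=1<4$, and $v_2(\Delta)=4<12$, which yields minimality at every prime with no squarefreeness hypothesis whatsoever. Second, the implication you need --- non-minimality at $\ell$ forces $v_\ell(c_4)\geqslant 4$ and $v_\ell(\Delta)\geqslant 12$ --- is valid at \emph{all} primes, including $2$ and $3$; it is only the converse (deciding non-minimality from large valuations) that requires Tate's algorithm or Kraus's conditions, so your extra caution at $2$ and $3$ is already covered by the same argument. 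In terms of what each approach buys: the paper's citation is brief and delegates every detail to \cite{Fn}, while your argument is self-contained, makes explicit exactly which arithmetic facts about $p$, $q$, and $\Delta'$ are used, and in its fallback form proves the proposition unconditionally for all primes $p,q>5$.
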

\begin{proof}
In view of Lemma 3.1 of \cite{Fn} .
\end{proof}

Now, we compute $c_4=48p^2$, $ c_6= -864q^2$, $b_2=0$, $b_4=-2p^2$, $b_6=4q^2$ and $b_8=-p^4$.
The following theorems determine the upper and lower limits for  $P_1$ and $P_2$ canonical height. 

\begin{thm}\label{T2}
Let $ \lbrace E_{(p,q)} \rbrace $ represents a family of elliptic curves over $ \Q$ as 
defined by the Weierstrass equation  $  E_{(p,q)} : y^2=x^3-p^2x+q^2 $, where p and q are prime numbers exceeding five. 
we consider  $P_1=(0, q)\in  E_{(p,q)}(\Q)  $ and $P_2=(-p, q)\in  E_{(p,q)}(\Q) $. 
If $p> 2\sqrt[4]{2}q$, then 
\[\hat{h}(P_1) \leqslant \frac{1}{2}log(p)+\frac{1}{24}log(2^{11}p^4), \quad \hat{h}(P_2) \leqslant \frac{1}{2}log(p)+\frac{1}{6}log(2^{11}p^4). \] 
\end{thm}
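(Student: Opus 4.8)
The plan is to exploit the decomposition $\hat{h}(P)=\hat{\lambda}_{\infty}(P)+\sum_{r\mid\Delta}\hat{\lambda}_{r}(P)$ recalled above and to estimate each local term separately for $P=P_1,P_2$. Since $\Delta=2^4\Delta'$ with $\Delta'$ odd and squarefree and $3,5\nmid\Delta'$ (hence $3,5\nmid\Delta$), the primes of bad reduction are exactly $2$ together with the primes $r\ge 7$ dividing $\Delta'$; at each such $r$ one has $v_r(\Delta)=1$, so the reduction is multiplicative of type $I_1$. The strategy is therefore: (i) show the odd local heights vanish, (ii) handle $r=2$ by Tate's algorithm, and (iii) bound the archimedean term via Tate's series. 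Summing the three contributions and trading $q$ for $p$ through the hypothesis $p>2\sqrt[4]{2}\,q$ (equivalently $32q^4<p^4$) should produce the asserted bounds.

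First I would dispose of the odd bad primes. At a node the coordinates satisfy $y\equiv 0$, since the singular point of $y^2=x^3-p^2x+q^2$ is the common zero of $2y$ and $3x^2-p^2$. For every $r\mid\Delta'$ one checks $r\ne q$: indeed $\Delta'\equiv 4p^6\pmod q$ and $q\nmid 4p^6$ because $q\ne p$ and $q>5$, so $q\nmid\Delta'$. Consequently $y(P_i)=q$ is an $r$-adic unit for both points, so neither $P_1$ nor $P_2$ meets the node, and each reduces to a nonsingular point on the identity component. As $x(P_1)=0$ and $x(P_2)=-p$ are $r$-integral, the local-height algorithm of \cite{S4} gives $\hat{\lambda}_{r}(P_1)=\hat{\lambda}_{r}(P_2)=0$ for every $r\mid\Delta'$. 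This is the step that makes the whole computation tractable and is worth isolating as a lemma.

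Next I would treat $r=2$. Because $v_2(c_4)=v_2(48p^2)=4>0$, the reduction at $2$ is additive, so I would run Tate's algorithm on the globally minimal model of Proposition \ref{T12} to read off the Kodaira type and component group. Reducing mod $2$ one has $p\equiv q\equiv 1$, so the equation becomes $y^2=x^3+x+1$, whose unique singular point is $(1,1)$. Thus $P_1\equiv(0,1)$ is nonsingular, forcing $\hat{\lambda}_{2}(P_1)=0$, whereas $P_2\equiv(-p,q)\equiv(1,1)$ lands on the node; this asymmetry is exactly what produces the larger constant for $P_2$. For $P_2$ I would identify the component of the special fibre hit by the point and evaluate the corresponding N\'eron function, obtaining a bounded rational multiple of $\log 2$.

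Finally, for the archimedean term I would use the duplication formula
\[ x(2P)=\frac{x^4+2p^2x^2-8q^2x+p^4}{4\,(x^3-p^2x+q^2)}, \]
which yields $x(2P_1)=p^4/(4q^2)$ and $x(2P_2)=(p^4+2pq^2)/q^2$, and feed it into Tate's series for $\hat{\lambda}_{\infty}$ together with the functional relation linking $\hat{\lambda}_{\infty}(2P)$ and $\hat{\lambda}_{\infty}(P)$ through $\log\lvert 2y\rvert$. The leading behaviour is $\hat{\lambda}_{\infty}(P_i)\approx\tfrac12\log p$, and the hypothesis $p>2\sqrt[4]{2}\,q$ lets me replace every occurrence of $q$ by $p$ (via $q^2<p^2/(4\sqrt2)$), so that the remaining correction can be packaged as the explicit term $\tfrac1{24}\log(2^{11}p^4)$ for $P_1$ and its fourfold counterpart for $P_2$. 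I expect the main obstacle to be precisely this archimedean estimate: bounding the tail of Tate's series uniformly in $p$ and $q$ and matching the bookkeeping of powers of $2$ and $p$ to the stated constants, while simultaneously pinning down the $2$-adic contribution of $P_2$. Once those two quantitative steps are in place, summing $\hat{\lambda}_{\infty}$, $\hat{\lambda}_{2}$ and the vanishing odd terms yields the two displayed inequalities.
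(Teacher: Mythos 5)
Your overall framework --- the decomposition $\hat{h}(P)=\hat{\lambda}_{\infty}(P)+\sum_{r\mid\Delta}\hat{\lambda}_{r}(P)$, the vanishing of the odd non-archimedean local heights, and Tate's series for the archimedean term --- is the same as the paper's, and your treatment of the finite places is actually more explicit than the paper's (the paper silently uses that, in the normalization of \cite{S4}, a point with integral coordinates has $\hat{\lambda}_{r}\leqslant 0$ at every finite $r$, so only the archimedean place matters for an upper bound). But there is a genuine gap: the entire quantitative content of Theorem \ref{T2} is the pair of explicit constants, and those come precisely from the archimedean estimate that you defer as ``the main obstacle.'' The paper gets them directly from Theorem 2.2 of \cite{S4}: for $P_1$, since $x(P_1)=0$, one must use the form \eqref{Ee1}, whose leading term is $\frac{1}{8}\log\lvert(0+p^2)^2-0\rvert=\frac{1}{2}\log p$ exactly and whose tail carries the coefficient $\frac{1}{8}\sum_{n\geqslant 1}4^{-n}=\frac{1}{24}$; for $P_2$ one uses \eqref{Ee2}, with leading term $\frac{1}{2}\log\lvert -p\rvert=\frac{1}{2}\log p$ and tail coefficient $\frac{1}{8}\sum_{n\geqslant 0}4^{-n}=\frac{1}{6}$. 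The displayed bounds then follow from a uniform estimate $\lvert z(2^{n}P_i)\rvert\leqslant 2^{11}p^4$, which rests on $H=\max\{4,2p^2,8q^2,p^4\}=p^4$ --- and this is exactly where the hypothesis $p>2\sqrt[4]{2}q$ enters. None of this bookkeeping appears in your proposal, so the two stated inequalities are not established by it.

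A secondary but real error: you attribute the larger constant for $P_2$ ($\frac{1}{6}$ versus $\frac{1}{24}$) to the fact that $P_2$ reduces to the singular point modulo $2$. That cannot be the explanation. In the normalization of \cite{S4}, the local height at $2$ of an integral point with singular reduction is \emph{negative} --- the paper's proof of Theorem \ref{T3} records $\lambda_2(P)=-\frac{1}{3}\log 2$ --- so the place $2$ can only decrease an upper bound, never enlarge it. The true source of the asymmetry is archimedean: because $x(P_1)=0$, the term $\frac{1}{2}\log\lvert x\rvert$ in \eqref{Ee2} degenerates, and one must fold the $n=0$ term of the series into the leading logarithm, i.e.\ use \eqref{Ee1}, whose tail starts at $n=1$; for $P_2$ the series starts at $n=0$, and $\frac{1}{8}\cdot\frac{4}{3}=\frac{1}{6}$ is four times $\frac{1}{8}\cdot\frac{1}{3}=\frac{1}{24}$. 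If you repair the archimedean step along these lines, your non-archimedean analysis (which is sound, and worth keeping as an explicit lemma) combines with it to give the theorem.
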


\begin{proof}
According to (4.1) of \cite{S4}, we have
\[H=Max\lbrace 4,2p^2,8q^2,p^4\rbrace.\]
 The assumption of the theorem  leads to the conclusion that $H=p^4$. To obtian the upper bound for canonical height for point $P_1$  based on Theorem (2.2) of \cite{S4}, we must use equation \ref{Ee1}.
\begin{equation}\label{Ee1}
\hat{\lambda}_{\infty}(P) =\frac{1}{8}\log(|(x^2+p^2)^2-8q^2x|)+\frac{1}{8}\sum_{n=1}^{\infty}4^{-n} \log(|z(2^{n}P)|).
\end{equation}
Hence, we have
\[\hat{\lambda}_{\infty}(P_1) \leqslant \frac{1}{2}log(p)+\frac{1}{24}log(2^{11}p^4)=UB1,\] and so for point $P_2$
according to Theorem (2.2) of \cite{S4}, we must use the equation \ref{Ee2}.
\begin{equation}\label{Ee2}
\hat{\lambda}_{\infty}(P) =\frac{1}{2}\log(|x|)+\frac{1}{8}\sum_{n=0}^{\infty}4^{-n} \log(|z(2^{n}P)|)
\end{equation}
Hence, we have
\[\hat{\lambda}_{\infty}(P_2) \leqslant \frac{1}{2}log(p)+\frac{1}{6}log(2^{11}p^4)=UB2. \] 
\end{proof}

\begin{thm}\label{T3}
Let $ \lbrace E_{(p,q)} \rbrace $ represents a family of elliptic curves over $ \Q$ as 
defined by the Weierstrass equation  $  E_{(p,q)} : y^2=x^3-p^2x+q^2 $, where p and q are prime numbers exceeding five. Let $P \in  E_{(p,q)}(\Q) $ be  a rational point on $E_{(p,q)}$.
If  $p> 2\sqrt[4]{2}q$, then
\[\hat{h}(P)> \frac{1}{8}log(\frac{p^4}{2})-\frac{1}{3}log(2)=LB. \]
\end{thm}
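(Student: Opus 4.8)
The plan is to bound $\hat{h}(P)$ from below through the local decomposition $\hat{h}(P)=\hat{\lambda}_{\infty}(P)+\sum_{r\mid\Delta}\hat{\lambda}_{r}(P)$ recorded above, showing that the finite places contribute non-negatively while the archimedean term already exceeds $LB$. First, exactly as in the proof of Theorem \ref{T2}, the hypothesis $p>2\sqrt[4]{2}\,q$ gives $q^{4}<p^{4}/32$ and forces $H=\max\{4,2p^{2},8q^{2},p^{4}\}=p^{4}$, so the normalisation of \cite{S4} is the same one used there.

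For the finite places I would use the explicit N\'eron local-height formulas together with the facts, from \cite{kh} and the present paper, that $\Delta'=4p^{6}-27q^{4}$ is squarefree and prime to $3$ and $5$. At each odd prime $r\mid\Delta'$ the reduction is then multiplicative of type $I_{1}$, the component group is trivial, every point reduces to the identity component, and $\hat{\lambda}_{r}(P)\geqslant 0$; the prime $2$ (where $\Delta'$ is odd and $v_{2}(\Delta)=4$) I would check separately using minimality (Proposition \ref{T12}). Consistency with $\hat{h}(P)>0$ forces the torsion of $E_{(p,q)}$ to be trivial, which a short Nagell--Lutz argument confirms, so every $P\neq\OO$ is genuinely non-torsion; granting the sign computation, $\hat{h}(P)\geqslant\hat{\lambda}_{\infty}(P)$.

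The heart of the matter is a lower bound for $\hat{\lambda}_{\infty}(P)$ that is \emph{uniform in} $P$, and here I would split along the two regimes of Theorem (2.2) of \cite{S4}. When $|x(P)|$ is small the valid expansion is \eqref{Ee1}, with leading term $\tfrac{1}{8}\log|\phi(x)|$ where $\phi(x)=(x^{2}+p^{2})^{2}-8q^{2}x$. Since $\phi'(x)=4(x^{3}+p^{2}x-2q^{2})$ and the cubic $x^{3}+p^{2}x-2q^{2}$ is strictly increasing, $\phi$ has a unique real critical point $x^{*}$; from $p^{2}x^{*}<2q^{2}$ and $q^{4}<p^{4}/32$ one gets $\phi(x)\geqslant\phi(x^{*})>p^{4}/2$ for every real $x$, hence $\tfrac{1}{8}\log|\phi(x)|\geqslant\tfrac{1}{8}\log(p^{4}/2)$. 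When $|x(P)|$ is large the expansion \eqref{Ee2} applies, with leading term $\tfrac{1}{2}\log|x|$, and $\tfrac{1}{2}\log|x|\geqslant\tfrac{1}{8}\log(p^{4}/2)=\tfrac{1}{2}\log\!\big(p\,2^{-1/4}\big)$ precisely once $|x|\geqslant p\,2^{-1/4}$, so the two regimes overlap and exhaust $E_{(p,q)}(\Real)\setminus\{\OO\}$.

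It then remains to bound the correcting Tate series $\tfrac{1}{8}\sum 4^{-n}\log|z(2^{n}P)|$ from below by $-\tfrac{1}{3}\log 2$, using Silverman's uniform estimate on $|z|$; combined with the bound on the leading term this yields $\hat{\lambda}_{\infty}(P)>\tfrac{1}{8}\log(p^{4}/2)-\tfrac{1}{3}\log 2=LB$ and therefore $\hat{h}(P)>LB$. I expect the main obstacle to be exactly this last estimate made uniform in $P$: one needs a positive lower bound for $|z(2^{n}P)|$ holding simultaneously along the whole orbit $\{2^{n}P\}$ and in both regimes (the two series, over $n\geqslant 1$ and $n\geqslant 0$, demand slightly different thresholds on $|z|$), and one must confirm that the elementary minimisation of $\phi$ is carried out on exactly the range of $x$ for which \eqref{Ee1} rather than \eqref{Ee2} is the convergent expansion. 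The separate sign computation at $r=2$ is the only other place a genuinely different argument is needed.
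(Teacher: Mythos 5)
Your decomposition is the same one the paper uses, but there is a genuine error in your treatment of the finite places, and it breaks your whole accounting. You claim that the finite places contribute non-negatively, so that $\hat{h}(P)\geqslant\hat{\lambda}_{\infty}(P)$, and you then plan to spend the entire $-\frac{1}{3}\log 2$ appearing in $LB$ on bounding the archimedean Tate tail. The non-negativity claim is false at $r=2$. Since $2\mid c_4=48p^2$ and $2\mid\Delta$, the reduction at $2$ is additive; the reduced curve over $\F$ with two elements is $y^2=x^3+x+1$, whose unique singular point is $(1,1)$, and any rational point with both coordinates odd --- for instance $P_2=(-p,q)$ itself --- reduces to that singular point. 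For such a point, Theorem 5.2(c) of \cite{S4} (exactly the result the paper invokes) gives $\hat{\lambda}_{2}(P)=-\frac{1}{3}\log 2<0$. Minimality (Proposition \ref{T12}) does not rescue you: it is the hypothesis under which Theorem 5.2 applies, not a reason for the local height to be non-negative. In the paper's proof, the term $-\frac{1}{3}\log 2$ in $LB$ is precisely the worst-case contribution of the place $2$: the paper shows $\hat{\lambda}_2(P)=0$ for nonsingular reduction and $-\frac{1}{3}\log 2$ for singular reduction, bounds the other finite places below by $0$, and bounds the archimedean part by $\hat{\lambda}_{\infty}(P)\geqslant\frac{1}{8}\log\left|(x^2+p^2)^2-8q^2x\right|\geqslant\frac{1}{8}\log\left|p^4-16q^4\right|>\frac{1}{8}\log(p^4/2)$, discarding the series in \eqref{Ee1} as non-negative. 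With your allocation, a point with singular reduction mod $2$ leaves you short by exactly $\frac{1}{3}\log 2$: you have committed that slack to the Tate series and have nothing left to absorb $\hat{\lambda}_2$.

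Two smaller points of comparison. First, your $I_1$/squarefree-$\Delta'$ argument for the odd primes dividing $\Delta$ is correct and is in fact a fuller justification of a step the paper leaves implicit; that part is a genuine improvement in rigor. Second, your suspicion that one may not simply drop the series $\frac{1}{8}\sum_{n\geqslant 1}4^{-n}\log|z(2^{n}P)|$ without justification is a fair criticism of the paper's own write-up, since the paper discards it silently. But the fix cannot be to borrow the $-\frac{1}{3}\log 2$, which the place $2$ already consumes; one must either show that this tail is non-negative under the validity conditions of Theorem 2.2 of \cite{S4}, or prove a bound for it and subtract that bound from $LB$, which would change the statement of the theorem. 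Your elementary minimization of $\phi(x)=(x^2+p^2)^2-8q^2x$ (unique critical point $x^*$ with $p^2x^*<2q^2$, hence $\phi(x)>p^4/2$ under $q^4<p^4/32$) is correct and reproduces the paper's leading-term bound; the regime-splitting between \eqref{Ee1} and \eqref{Ee2} and the Nagell--Lutz digression are not needed for the paper's route.
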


\begin{proof}
We have two cases for computing the local height based on proposition \ref{T12}and theorem (5.2) of \cite {S4}.
If $P$ reduces to a nonsingular point in module two, then  $\lambda_2(P)=0$.  Otherwise, $P$ becomes a singular point modulo two. According to (c) of the theorem (5.2) of \cite{S4}, we have $\lambda_2(P)=-\frac{1}{3}log(2)$. Next, we show that 
\[\hat{\lambda}_{\infty}(P)\geqslant \frac{1}{8}\log(|(x^2+p^2)^2-8q^2x|)\geqslant \frac{1}{8}\log(|p^4-16q^4|)>\frac{1}{8}log(\frac{p^4}{2}),\] therefore
\[\hat{h}(P)> \frac{1}{8}log(\frac{p^4}{2})-\frac{1}{3}log(2). \]
\end{proof}

\section{Proof of Theorem 1.1 }

The theorem (3.1) of \cite{si} is a fundamental theorem that is used to prove the theorem \ref{T4}.

\begin{thm}\label{T4}
Assume that $E$ is an elliptic curve with a rank of $r\geqslant 2$ over $\Q$. Let $P'_1$ and $P'_2 $ be independent points in  the $E(\Q)$ modulo
$E(\Q)_{tors}$. Choose a basis $\lbrace Q_1,Q_2,\ldots, Q_r \rbrace$ for $E(\Q)$ modulo $E(\Q)_{tors}$
according to the condition   $P'_1,P'_2 \in \langle Q_1 \rangle+\langle Q_2 \rangle $. Assume that $E(\Q)$  
contains no infinite-order point $Q$ with  $\hat{h}(Q)\leqslant \lambda$, where $ \lambda$ is  positive real number. Then, index $v$ of the span of $P'_1$ and
$P'_2$ in $\langle Q_1 \rangle+\langle Q_2 \rangle $ satisfies
\[v\leqslant \frac{2}{\sqrt{3}}\frac{\sqrt{R(P'_1,P'_2)}}{\lambda},\]
where
\[R(P'_1,P'_2)=\hat{h}(P'_1)\hat{h}(P'_2)-\frac{1}{4}(\hat{h}(P'_1+P'_2) -\hat{h}(P'_1) -\hat{h}(P'_2) )^2<\hat{h}(P'_1)\hat{h}(P'_2),\]
thus
\[v\leqslant \frac{2}{\sqrt{3}}\frac{\sqrt{\hat{h}(P'_1)\hat{h}(P'_2)}}{\lambda}.\]
\end{thm}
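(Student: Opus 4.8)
The plan is to read the statement as an index bound for a rank-two Euclidean lattice and to reduce it to Hermite's inequality in dimension two. First I would recall that the canonical height $\hat{h}$ is a positive definite quadratic form on the free abelian group $M=E(\Q)/E(\Q)_{\mathrm{tors}}\cong\Z^{r}$, with associated bilinear pairing $\langle P,Q\rangle=\frac12\bigl(\hat{h}(P+Q)-\hat{h}(P)-\hat{h}(Q)\bigr)$, so that $M$ becomes a Euclidean lattice in which the squared length of a vector $Q$ is exactly $\hat h(Q)$. The subgroup $L=\cyc{Q_1}+\cyc{Q_2}$ is then a rank-two sublattice, and by the prescribed choice of basis it is precisely the saturation of $\Z P'_1+\Z P'_2$ inside $M$; indeed, one produces such a basis by splitting off the saturation of $\{P'_1,P'_2\}$ as a direct summand of $M$. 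Consequently $v=[L:\Z P'_1+\Z P'_2]$ is the index of the span of the two points inside this rank-two lattice.

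Next I would compute covolumes. The sublattice $\Z P'_1+\Z P'_2$ has covolume equal to the square root of the Gram determinant,
\[
\mathrm{covol}(\Z P'_1+\Z P'_2)=\sqrt{\det\begin{pmatrix}\hat{h}(P'_1)&\langle P'_1,P'_2\rangle\\ \langle P'_1,P'_2\rangle&\hat{h}(P'_2)\end{pmatrix}}=\sqrt{R(P'_1,P'_2)},
\]
where expanding $\langle P'_1,P'_2\rangle$ reproduces the displayed formula for $R(P'_1,P'_2)=\hat h(P'_1)\hat h(P'_2)-\langle P'_1,P'_2\rangle^2$. Since $L$ contains $\Z P'_1+\Z P'_2$ with index $v$, its covolume is $\mathrm{covol}(L)=\sqrt{R(P'_1,P'_2)}/v$.

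The decisive step is Hermite's inequality in dimension two: any rank-two lattice $L$ satisfies $\lambda_1(L)^2\le \gamma_2\,\mathrm{covol}(L)$ with Hermite constant $\gamma_2=2/\sqrt3$, where $\lambda_1(L)$ denotes the length of a shortest nonzero vector. Because every nonzero $Q\in L$ has infinite order and squared length $\hat{h}(Q)$, the hypothesis that $E(\Q)$ contains no infinite-order point of height $\le\lambda$ forces $\lambda_1(L)^2=\min_{0\ne Q\in L}\hat{h}(Q)>\lambda$. Combining these,
\[
\lambda<\lambda_1(L)^2\le \frac{2}{\sqrt3}\,\mathrm{covol}(L)=\frac{2}{\sqrt3}\,\frac{\sqrt{R(P'_1,P'_2)}}{v},
\]
which rearranges to $v\le \frac{2}{\sqrt3}\frac{\sqrt{R(P'_1,P'_2)}}{\lambda}$. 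Finally, positivity of $\langle P'_1,P'_2\rangle^2$ for the independent points $P'_1,P'_2$ gives $R(P'_1,P'_2)<\hat{h}(P'_1)\hat{h}(P'_2)$, yielding the weaker bound stated last.

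I expect the main obstacle to be the lattice-theoretic bookkeeping rather than any delicate estimate: specifically, justifying that the prescribed basis realizes $L$ as the saturation of $\Z P'_1+\Z P'_2$ (so that $v$ genuinely is the index of the span in its saturation), and fixing the correct normalization of Hermite's constant $\gamma_2=2/\sqrt3$ alongside the identification of $\hat h$ with the squared lattice norm. Once these conventions are pinned down the inequality is immediate; equivalently, the whole argument is the rank-two specialization of Theorem (3.1) of \cite{si} and could be quoted from there directly.
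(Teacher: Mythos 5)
Your proposal is correct in substance, but note that the paper itself gives no proof of this theorem at all: it is simply quoted as Theorem (3.1) of \cite{si}, with the preceding sentence deferring entirely to that reference. What you have written is, in effect, the argument behind Siksek's theorem specialized to rank two: identify $E(\Q)/E(\Q)_{tors}$ with a Euclidean lattice under the N\'eron--Tate pairing, observe that $\Z P'_1+\Z P'_2$ sits inside the rank-two direct summand $L=\langle Q_1\rangle+\langle Q_2\rangle$ with index $v$, compare covolumes ($\mathrm{covol}(L)=\sqrt{R(P'_1,P'_2)}/v$), and apply Hermite's inequality $\lambda_1(L)^2\leqslant \frac{2}{\sqrt{3}}\,\mathrm{covol}(L)$ together with the hypothesis $\lambda_1(L)^2>\lambda$. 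These steps are all sound, including the normalization $\gamma_2=2/\sqrt{3}$ and the identification of $L$ with the saturation of the span (though for the covolume comparison you only need finite-index containment, not saturation). Your route buys self-containedness and makes visible exactly where the constant $2/\sqrt{3}$ comes from; the paper's citation buys brevity and inherits Siksek's general-rank statement.

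One genuine quibble: your justification of the strict inequality $R(P'_1,P'_2)<\hat{h}(P'_1)\hat{h}(P'_2)$, namely ``positivity of $\langle P'_1,P'_2\rangle^2$ for the independent points,'' is not valid. Independence gives $R(P'_1,P'_2)>0$ (strict Cauchy--Schwarz), but it does not force $\langle P'_1,P'_2\rangle\neq 0$: independent points can be orthogonal under the height pairing, in which case $R(P'_1,P'_2)=\hat{h}(P'_1)\hat{h}(P'_2)$ exactly. So the strict inequality cannot be proved as stated --- this is a defect of the paper's formulation, inherited from how it transcribes Siksek, rather than of your argument --- and it is harmless: the final bound $v\leqslant \frac{2}{\sqrt{3}}\sqrt{\hat{h}(P'_1)\hat{h}(P'_2)}/\lambda$ only requires the non-strict inequality $R(P'_1,P'_2)\leqslant\hat{h}(P'_1)\hat{h}(P'_2)$.
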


This has led us to the proof of Theorem  \ref{T1}.
\begin{proof}
In addition to the fact that $2\nmid v$ holds true, we use three theorems to support our claim:   \ref{T2}, \ref{T3} and  \ref{T4}.
Now, we determine the right-hand side of the following equation:
\[v\leqslant \frac{2}{\sqrt{3}}.\frac{\sqrt{ UB1.UB2 }}{LB}.\]
The result of the calculation for all prime numbers  $p\geqslant 41$ is  $v< 3$.  The evidence is therefore persuasive.
\end{proof}

\end{document}